\documentclass[12pt,reqno,final]{amsart}

%
%
%

\headheight=6.15pt
\textheight=8.75in
\textwidth=6.5in
\oddsidemargin=0in
\evensidemargin=0in
\topmargin=0in

\usepackage{epsfig}
\usepackage{amsmath, amsthm, amsfonts}
\usepackage{graphicx}

\numberwithin{equation}{section}

\newcommand{\R}{{\mathbb R}}

\renewcommand{\Re}{{\operatorname{Re\,}}}

\newcommand{\bigk}{\mathop{\raisebox{-5pt}{\huge K}}}
\newcommand*{\fplus}{\genfrac{}{}{0pt}{}{}{+}}
\newcommand*{\fdots}{\genfrac{}{}{0pt}{}{}{\cdots}}
\newtheorem{theo}{{\sc \bf Theorem}}[section]
\newtheorem{cor}[theo]{{\sc \bf Corollary}}
\newtheorem{lem}[theo]{{\sc \bf Lemma}}

\begin{document}

\title{A Value Region Problem For Continued Fractions and Discrete Dirac Equations}

\author{Slawomir Klimek}
\address{Department of Mathematical Sciences,
Indiana University-Purdue University Indianapolis,
402 N. Blackford St., Indianapolis, IN 46202, U.S.A.}
\email{sklimek@math.iupui.edu}

\author{Matt McBride}
\address{Department of Mathematics and Statistics,
Mississippi State University,
175 President's Cir., Mississippi State, MS 39762,  U.S.A.}
\email{mmcbride@math.msstate.edu }

\author{Sumedha Rathnayake}
\address{Department of Mathematics,
University of Michigan,
530 Church St., Ann Arbor, MI  48109, U.S.A.}
\email{sumedhar@umich.edu}

\author{Kaoru Sakai}
\address{Department of Mathematical Sciences,
Indiana University-Purdue University Indianapolis,
402 N. Blackford St., Indianapolis, IN 46202, U.S.A.}
\email{ksakai@iupui.edu}  

\date{\today}

\begin{abstract}
Motivated by applications in noncommutative geometry we prove several value range estimates for even convergents and tails, and odd reverse sequences of Stieltjes type continued fractions with bounded ratio of consecutive elements, and show how those estimates control growth of solutions of a system of discrete Dirac equations.  
\end{abstract}

\maketitle
\section{Introduction}
In this paper we prove useful growth estimates, see (\ref{dirac_cor}), for the solutions of a general class of discrete Dirac operators. This is achieved by translating such growth questions into a new type of value region problem for continued fractions, and then solving it by careful analysis of certain types of M\o bius transformations. This investigation was motivated by our work in \cite{KM1}, \cite{KM2}, \cite{KM3} on Dirac operators and their quantum analogs, subject to global boundary conditions, on various classical and noncommutative domains. 

To explain the particular problem we consider in this paper, we start with the following Dirac operator on $\R$:

\begin{equation*}
D = \left(
\begin{array}{cc}
-m & \frac{d}{dx} \\
-\frac{d}{dx} & m
\end{array}\right),
\end{equation*}
for $m\in\R$.  This operator $D$ is a simplified, one-dimensional version of the Dirac operator studied in \cite{KM3}.  The corresponding system of Dirac equations:

\begin{equation}\label{dirac_equation}
\left\{
\begin{aligned}
-mg + \frac{dh}{dx} &= 0 \\
-\frac{dg}{dx} + mh &= 0 \\
g(0) = 0,& \ h(0) = 1
\end{aligned}\right.
\end{equation}
can be easily solved, and solutions are $g(x) = \sinh(mx)$ and $h(x) = \cosh(mx)$.  Since the ratio of $g$ and $h$ is the hyperbolic tangent function, the ratio stays bounded for all $x\in\R$, or equivalently for all $m\in\R$.  

In the two-dimensional situation investigated in \cite{KM3}, the solutions of the similar system were comprised of the modified Bessel functions of the second kind, and the knowledge of the behavior of those solutions for large $|m|$ was instrumental in establishing compactness of the resolvent of the corresponding Dirac operator.
   
In \cite{KM4}, a noncommutative analog of the Dirac operator of \cite{KM3} was introduced and studied. It turned out that this operator is essentially a form of  discretization of the classical Dirac operator.  For our simplified $D$, the discrete analog of system (\ref{dirac_equation}), implied by noncommutative geometry, is:

\begin{equation*}
\left\{
\begin{aligned}
-mg_n + \frac{h_{n+1} - h_n}{\alpha_n} &=0 \\
\frac{g_n - g_{n+1}}{\beta_n} + mh_{n+1} &=0
\end{aligned}\right.
\end{equation*}
for some sequences $\alpha_n$ and $\beta_n$ of positive numbers and $m>0$.   
Exactly such systems and the corresponding operators were studied from the spectral theory point of view in \cite{AB}, \cite{A}, \cite{BO}, and references therein, so it would be interesting to investigate how the results in this paper can be used in spectral analysis.

If we rewrite the above equation in a matrix form we arrive at the following:

\begin{equation}\label{discrete_dirac_matrix_equation}
\left(
\begin{array}{c}
g_{n+1} \\ h_{n+1}
\end{array}\right) = \left(
\begin{array}{cc}
1 + m^2\alpha_n\beta_n & m\beta_n \\
m\alpha_n & 1
\end{array}\right)\left(
\begin{array}{c}
g_n \\ h_n
\end{array}\right)
\end{equation}
The key point is, as indicated in the next section, that this is precisely the equation satisfied by numerators and denominators of the convergents of continued fractions of the form $\bigk_{k=1}^\infty\left(\frac{1}{b_k}\right)$ with $b_{2k}=m\alpha_k$ and $b_{2k+1}=m\beta_k$, see equations (\ref{H-equations}). Consequently, the ratios of $g_n$ and $h_n$, for particular initial conditions, can be expressed as continued fractions.  The problem we are interested in is to determine if those continued fractions stay bounded for large $m$. The answer to this problem, see Corollary \ref{dirac_cor}, turned out to be essential for establishing compactness of the resolvent of the Dirac operator in \cite{KM4}.

In fact in this paper we look at a far more general situation than needed for \cite{KM4}; we consider continued fractions whose elements are complex numbers with positive real part (Stieltjes type), have bounded ratio of consecutive elements,  and we look at a full ``value region problem" (see  \cite{WA}, Chapter VIII).  Namely, we ask about a (bounded) region in the complex plane which contains values of the convergents of a continued fraction of our special type. More specifically, in our value region problem we consider only even convergents for continued fractions of the Stieltjes type with bounded ratio of consecutive elements, whether the fractions are convergent or not. Even more generally, we also study the same question for even tail sequences and for what we call odd reverse sequences associated with a continued fraction. The main results we obtain in this paper show that two types of circles with sufficiently large radii form such value regions. In fact, we obtain that circles centered at the origin and with sufficiently large radii are value regions for Van Vleck fractions with narrow sector angle $\theta<\pi/4$, assuming the ratio of two consecutive elements is bounded. We also provide an example illustrating that the results cannot be extended to sectors with the angle $\theta\geq\pi/4$. However, as described in our second main result, a shifted circle through the origin, centered on the real axis, and large enough radius is a value region for even convergents for a much larger class of continued fractions including all Van Vleck fractions with bounded ratio of two consecutive elements. 

While the considerations in this paper are fairly elementary, there are very few results in the literature on continued fractions that establish boundedness of continued fractions when all elements are scaled. One such result  can be deduced from Stieltjes' paper \cite{ST}. For $b_n>0$ and for $w$ such that $|\text{Arg}(w)|\leq\theta<\pi/2$, Stieltjes proved that there are bounded, nondecreasing functions $\phi_{2n}(t)$ on $[0,\infty)$ such that
\begin{equation*}
\left|\frac{1}{wb_1}\fplus\frac{1}{wb_2}\fplus\frac{1}{wb_3}\fplus\fdots\fplus\frac{1}{wb_{2n-1}}\fplus\frac{1}{wb_{2n}}\right|\leq\frac{1}{2\cos\theta}\int_0^\infty\frac{d\phi_{2n}(t)}{\sqrt t},
\end{equation*}  
i.e. the even convergents stay bounded when scaling all elements by the parameter $w$ in a sector.  However the functions $\phi_{2n}(t)$ in the formula above are not easily interpretable in term of the coefficients $b_n$. Moreover similar considerations do not seem to work for tails and reverse sequences needed for \cite{KM4}.
 
Another similar result called Lima\c{c}on Theorem is contained in \cite{L2}. However it assumes that $b_n\geq b>0$ which, for the purpose of applications in \cite{KM4}, is too restrictive.

Finally the inequality in \cite{KM4}:
\begin{equation*}
\frac{1}{wb_1}\fplus\frac{1}{wb_2}\fplus\frac{1}{wb_3}\fplus\fdots\fplus\frac{1}{wb_{2n-1}}\fplus\frac{1}{wb_{2n}}\leq
\sum_{i=i}^{n}\frac{wb_{2i-1}}{1+w^2b_{2i-1}b_{2i}},
\end{equation*}  
for positive $b_i$ and $w$, requires extra convergence assumptions to be usable in the limit $n\to\infty$.

The material in this paper is divided into two parts. Section \ref{sec2} contains our notation and the statements of the results while all the proofs are deferred to Section \ref{sec3}.


\section{Notation and Results}\label{sec2}

Given a sequence of nonzero complex numbers $\{b_n\}$, we associate to it a continued fraction of the form 

\begin{equation*}
\bigk_{k=1}^\infty\left(\frac{1}{b_k}\right)=\cfrac{1}{b_1+ \cfrac{1}{b_2+ \cfrac{1}{b_3 + \cdots}}} = \frac{1}{b_1}\fplus\frac{1}{b_2}\fplus\frac{1}{b_3}\fplus\fdots\fplus\frac{1}{b_n}\fplus\fdots
\end{equation*} 

The sequence of convergents $\{f_n\}=\{\frac {A_n}{B_n} \}$ (also referred to as the $n^{th}$ approximant) is defined by $ f_n=\frac{A_n}{B_n}=\bigk_{k=1}^n\left(\frac{1}{b_k}\right)$. The terms $A_n$ and $B_n$ are known to satisfy the Wallis-Euler recurrence relation, see for example \cite{CPVWJ}:
\begin{equation}\label{wallis_euler_recurrence_rel}
\begin{aligned}
A_n &= b_nA_{n-1}+A_{n-2} \ \textrm{ and }\\
B_n &= b_nB_{n-1}+B_{n-2}  \ \textrm{ for } n\geq 2 \\
\textrm{ with } A_0 &=0,\: A_1=1,\: B_0=1,\: B_1=b_1.
\end{aligned}
\end{equation} 

They also satisfy the following determinant relations.
\begin{equation*}
\begin{aligned}
A_nB_{n-1}-A_{n-1}B_n &=\left(-1\right)^{n-1}, \ n\geq 0 \\
A_nB_{n-2}-A_{n-2}B_n &=\left(-1\right)^n b_n, \ n\geq 1.
\end{aligned}
\end{equation*}

Using equations (\ref{wallis_euler_recurrence_rel}) we can easily derive the following matrix equation:

\begin{equation}\label{H-equations}
\left(
\begin{array}{c}
H_{2n+1} \\ H_{2n}
\end{array}\right) = \left(
\begin{array}{cc}
1 + b_{2n}b_{2n+1} & b_{2n+1} \\
b_{2n} & 1
\end{array}\right)\left(
\begin{array}{c}
H_{2n-1} \\ H_{2n-2}
\end{array}\right)
\end{equation}
where  $H_n$ is either  $A_n$ or $B_n$. By comparing with equation (\ref{discrete_dirac_matrix_equation}) we see that those are precisely the equation for the solutions of discrete Dirac operators.

Consider the M\"obius transformation $s_n : \mathbb C \cup \{\infty \} \to \mathbb C \cup \{\infty \}$ defined by $ s_n(w)=\frac{1}{b_n+w}$. It is easily seen by induction that 
\begin{equation}\label{convergent_formula}
s_1\circ s_2\circ\cdots\circ s_n(w)=\frac{A_n+wA_{n-1}}{B_n+wB_{n-1}}=: f_n(w).
\end{equation}
Notice that $s_1\circ s_2\circ\cdots\circ s_n(0)=f_n(0)=\frac{A_n}{B_n}=f_n$. From now on we will write $f_n$ for $f_n(0)$.

Given  a continued fraction $\bigk_{k=1}^\infty\left(\frac{1}{b_k}\right)$ we define a tail sequence  $\{t_n(w)\}$ starting with a constant $w\in \mathbb C \cup \{\infty \}$, see \cite{CPVWJ}, recursively by 
\begin{equation*}
t_{n-1}(w)=s_n(t_n(w))=\frac1{b_n+t_n(w)} 
\end{equation*}
and $t_0(w)=w$. From equation \eqref{wallis_euler_recurrence_rel} we see that $-\frac{B_{n-1}}{B_{n-2}}=\frac 1{b_n-\frac{B_n}{B_{n-1}}}$. This gives an example of a tail sequence $t_n(\infty)=-\frac {B_n}{B_{n-1}}$. Similarly, $t_n(0)=-\frac{A_n}{A_{n-1}}$ gives another example of a tail sequence. Next we define a reverse sequence $\{r_n(w)\}$ starting with $w$ recursively by 
\begin{equation*}
r_{n+1}(w)=s_n(r_n(w))=\frac 1{b_n+r_n(w)}
\end{equation*}
where $r_1(w)=w$ is an arbitrary number in the extended complex plane. Two such examples of  reverse sequences are $r_{n+1}(0)=\frac{B_n-1}{B_n}$ and $r_{n+1}(\infty)=\frac{A_{n-1}}{A_n}$. Similarly to \eqref{convergent_formula}, one can easily verify by induction that the tail and reverse sequences satisfy the following relations:

\begin{equation}\label{tail_and_reverse_formula}
t_{n}(w)= \frac{A_{n}-wB_{n}}{-A_{n-1}+wB_{n-1}}\ \textrm{ and } \ r_{n+1}(w)=\frac{B_{n-1}+wA_{n-1}}{B_n+wA_n}=s_n\circ s_{n-1}\circ\cdots\circ s_1(w),
\end{equation}
the last equality justifying the name: reverse sequences. It also follows that $t_{n}(w)$ is the inverse function of $f_{n}(w)$ and $r_{n+1}(w)=-1/t_{n}(-1/w)$.

Below we introduce three major types of continued fractions that we encounter in this paper. 

\begin{itemize}
\item
A  continued fraction of the form $\textrm{\large K}_{n=1}^\infty\big(\frac{1}{b_n}\big)$ with $\sum_{n=1}^{\infty} |b_n|< \infty$  will be called a {\bf Stern-Stolz fraction}. 
\item
If there is  $\theta$ with $0\leq \theta < \pi/2$ such that $b_n \in S_{\theta}=\{z:|\textrm{Arg}\,z|\leq\theta\}$ for all $n$ then $\textrm{\large K}_{n=1}^\infty\big(\frac{1}{b_n}\big)$ will be called a {\bf Van Vleck fraction}. 
\item
A continued fraction of the form $\textrm{\large K}_{n=1}^\infty\big(\frac{1}{b_n }\big)$ where $b_n$'s are complex numbers with positive real parts will be called a {\bf Stieltjes type fraction}. In particular a Van Vleck fraction is a Stieltjes type fraction.

\end{itemize}

The following classical theorem regarding the convergence of a Stern-Stolz fraction states that the limits of numerators and denominators of the even and odd convergents exist even if the continued fraction itself is not convergent. As a consequence, the even and odd convergents  $f_{2n}, f_{2n+1}$  also converge to limits denoted by $f_{\textrm{even}}$ and $f_{\textrm{odd}}$ respectively. 

\begin{theo}[Stern-Stolz]
If $\ \textnormal{\large K}_{n=1}^\infty\big(\frac{1}{b_n}\big)$ is a Stern-Stolz fraction then the sequences $\{A_{2n}\}$, $\{A_{2n+1}\}, \{B_{2n}\}$ and $\{B_{2n+1}\}$ converge to the limits $A_{\textrm{even}}, A_{\textrm{odd}}, B_{\textrm{even}}$ and $B_{\textrm{odd}}$ respectively. Moreover, $A_{\textrm{odd}}B_{\textrm{even}}-A_{\textrm{even}}B_{\textrm{odd}}=1$. Therefore $f_{\textrm{even}}-f_{\textrm{odd}} \neq 0$ and hence the continued fraction is not convergent.
\end{theo}
For more details on the theorem see \cite{BS}.

As a consequence of the above theorem and the formula (\ref{tail_and_reverse_formula}), we have that for Stern-Stolz fractions \  $t_{2n}(w), t_{2n+1}(w), r_{2n}(w)$ and $r_{2n+1}(w)$ converge to finite limits $t_{\textrm{even}}(w)$, $t_{\textrm{odd}}(w)$, $r_{\textrm{even}}(w)$ and $r_{\textrm{odd}}(w)$ respectively. Moreover, $t_{\textrm{even}}(w)=\frac1{t_{\textrm{odd}}(w)}$ and $r_{\textrm{even}}(w)=\frac1{r_{\textrm{odd}}(w)}$.

The classical Van Vleck theorem below gives the necessary and sufficient condition for the convergence of a Van Vleck continued fraction.

\begin{theo}[Van Vleck]\label{Van_Vleck}
A Van Vleck fraction $\textnormal{\large K}_{n=1}^\infty\big(\frac{1}{b_n}\big)$ is convergent if and only if $\sum_{n=1}^{\infty}|b_n|=\infty$.
\end{theo}
Several different proofs of this theorem are contained in \cite{BS}, \cite{GW}, \cite{L1}.

\rem It follows that for Van Vleck fractions $f_{\textrm{even}}(0)$ and $f_{\textrm{odd}}(0)$ always exist. If a Van Vleck fraction
$\textnormal{\large K}_{n=1}^\infty\big(\frac{1}{b_n}\big)$ is convergent then $f_{\textrm{even}}(0)=f_{\textrm{odd}}(0)=:f(0)$.

The first of our main theorems in this note is the following estimate for even  Van Vleck continued fractions with bounded ratio of consecutive elements in the sector $S_{\theta}$ for $\theta < \pi/4$. It shows that under those conditions the fractions stay inside an origin centered disk of sufficiently large radius.

\begin{theo}\label{main_theorem}
Suppose $C$ is a constant such that \ $C^2 \geq \sup\limits_{1\leq k\leq n} \left|\frac {p_k}{q_k}\right|\cdot \frac 1{\cos 2\theta}$ \ where $p_k, q_k, z \in S_{\theta}$ with $0\leq \theta < \pi/4$ and $p_k, q_k \neq 0$. If $|z|\leq C$ then
\begin{equation*}
\left|\frac{1}{q_1}\fplus\frac{1}{p_1}\fplus\frac{1}{q_2}\fplus\frac{1}{p_2}\fplus\fdots\fplus\frac{1}{q_n}\fplus\frac{1}{p_n+z}\right| \leq C.
\end{equation*}
\end{theo}

Now we look at the consequences of this estimate. Since 
$$f_{2n}(w)=\frac{1}{b_1}\fplus\frac{1}{b_2}\fplus\frac{1}{b_3}\fplus\fdots\fplus\frac{1}{b_{2n-1}}\fplus\frac{1}{b_{2n}+w},$$
$$ r_{2n+1}(w)=\frac{1}{b_{2n}}\fplus\frac{1}{b_{2n-1}}\fplus\frac{1}{b_{2n-2}}\fplus\fdots\fplus\frac{1}{b_2}\fplus\frac{1}{b_1+w},$$
$$t_{2n}(w)=\frac{1}{b_{2n+1}}\fplus\frac{1}{b_{2n+2}}\fplus\frac{1}{b_{2n+3}}\fplus\fdots\fplus\frac{1}{b_{2N-1}}\fplus\frac{1}{b_{2N}+t_{2N}(w)},$$
we immediadely obtain the following information about the covergents, reverse and tail sequences.

\begin{cor}\label{main_lemma1}
Let $\textnormal{\large K}_{n=1}^\infty\big(\frac{1}{b_n}\big)$ be a Van Vleck fraction with $b_n \in S_{\theta}$ where $0\leq \theta < \pi/4$. Assume that \  $\sup\limits_n \left|\frac{b_{2n}}{b_{2n-1}}\right| < \infty$ and let $C^2 \geq \sup\limits_n \left|\frac{b_{2n}}{b_{2n-1}}\right|\cdot \frac1{\cos 2\theta}$. If $w\in S_{\theta}$ and $|w|\leq C$ then $f_{2n}(w) \in S_{\theta}$ and $|f_{2n}(w)|\leq C$.
\end{cor}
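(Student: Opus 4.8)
The plan is to deduce both conclusions directly from Theorem~\ref{main_theorem} together with one elementary geometric observation about the sector $S_\theta$. For the norm bound I would start from the displayed identity
\begin{equation*}
f_{2n}(w)=\frac{1}{b_1}\fplus\frac{1}{b_2}\fplus\frac{1}{b_3}\fplus\fdots\fplus\frac{1}{b_{2n-1}}\fplus\frac{1}{b_{2n}+w}
\end{equation*}
and make the identification $q_k=b_{2k-1}$, $p_k=b_{2k}$ for $1\le k\le n$, and $z=w$. Under the hypothesis $b_n\in S_\theta$ for all $n$, every $p_k,q_k,z$ lies in $S_\theta$ and is nonzero. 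The ratio appearing in Theorem~\ref{main_theorem} is $|p_k/q_k|=|b_{2k}/b_{2k-1}|$, so $\sup_{1\le k\le n}|p_k/q_k|\le\sup_k|b_{2k}/b_{2k-1}|$, and hence the chosen $C$ satisfies $C^2\ge\sup_{1\le k\le n}|p_k/q_k|\cdot\frac1{\cos 2\theta}$ for every $n$. Since moreover $|w|\le C$, the theorem applies verbatim and yields $|f_{2n}(w)|\le C$.

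For the membership $f_{2n}(w)\in S_\theta$, I would argue that each map $s_n(\zeta)=\frac1{b_n+\zeta}$ sends $S_\theta$ into itself. Indeed, for $\theta<\pi/2$ the set $S_\theta$ is a convex cone, so $\zeta\in S_\theta$ and $b_n\in S_\theta$ force $b_n+\zeta\in S_\theta$ (and $\operatorname{Re}(b_n+\zeta)>0$, so the sum is nonzero); since $\operatorname{Arg}(1/\zeta)=-\operatorname{Arg}(\zeta)$ and $S_\theta$ is symmetric about the real axis, reciprocation preserves $S_\theta$, giving $s_n(\zeta)\in S_\theta$. Because $f_{2n}(w)=s_1\circ s_2\circ\cdots\circ s_{2n}(w)$ by \eqref{convergent_formula}, starting from $w\in S_\theta$ and applying the maps $s_{2n},s_{2n-1},\dots,s_1$ one at a time keeps us inside $S_\theta$ at every stage, so $f_{2n}(w)\in S_\theta$.

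The genuine content is thus entirely carried by Theorem~\ref{main_theorem}: the corollary is a change of notation for the norm estimate, supplemented by the short invariance induction for the sector statement. The only point that requires a moment's care — and the closest thing here to an obstacle — is checking that the hypotheses of the theorem hold for every $n$ \emph{simultaneously} with a single constant $C$. This is exactly where the global assumption $\sup_k|b_{2k}/b_{2k-1}|<\infty$ (rather than a per-$n$ bound) enters, since it dominates each finite supremum $\sup_{1\le k\le n}|b_{2k}/b_{2k-1}|$ and thereby guarantees that one fixed $C$ works uniformly in $n$.
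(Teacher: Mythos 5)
Your proposal is correct and follows exactly the route the paper intends: the paper derives this corollary ``immediately'' from Theorem~\ref{main_theorem} via the displayed identity for $f_{2n}(w)$ with the identification $q_k=b_{2k-1}$, $p_k=b_{2k}$, $z=w$, and the sector membership comes from the remark (opening Section~\ref{sec3}) that $S_\theta$ is closed under addition and reciprocation, which is precisely your invariance argument for the maps $s_k$. You have simply written out in full the details the paper leaves implicit, including the correct observation that the global supremum bound furnishes one constant $C$ valid for all $n$.
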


A parallel result also holds for odd reverse sequences.

\begin{cor}\label{main_theorem_i}
Let $\textnormal{\large K}_{n=1}^\infty\big(\frac{1}{b_n}\big)$ be a Van Vleck fraction with $b_n \in S_{\theta}$ where $0\leq \theta < \pi/4$. Assume that \  $\sup\limits_n \left|\frac{b_{2n-1}}{b_{2n}}\right| < \infty$ and let $C^2 \geq \sup\limits_n \left|\frac{b_{2n-1}}{b_{2n}}\right|\cdot \frac1{\cos 2\theta}$. Then the following holds:
 if $w\in S_{\theta}$ and $|w|\leq C$ then $r_{2n+1}(w) \in S_{\theta}$ and $|r_{2n+1}(w)|\leq C$.
\end{cor}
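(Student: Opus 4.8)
The plan is to obtain Corollary~\ref{main_theorem_i} as a direct specialization of Theorem~\ref{main_theorem}, exactly parallel to Corollary~\ref{main_lemma1} but with the partial denominators read off in reverse order. Starting from the identity
\[
r_{2n+1}(w)=\frac{1}{b_{2n}}\fplus\frac{1}{b_{2n-1}}\fplus\frac{1}{b_{2n-2}}\fplus\fdots\fplus\frac{1}{b_2}\fplus\frac{1}{b_1+w}
\]
displayed above, I would compare this term by term with the continued fraction appearing in Theorem~\ref{main_theorem}. Reading the partial denominators from left to right, the odd slots should be assigned $q_k=b_{2(n-k+1)}$ and the even slots $p_k=b_{2(n-k)+1}$, with $z=w$; one checks $q_1=b_{2n}$, $p_1=b_{2n-1}$, \dots, $q_n=b_2$, $p_n=b_1$, so that the final block $\tfrac{1}{p_n+z}$ is precisely $\tfrac{1}{b_1+w}$ and the dictionary reproduces $r_{2n+1}(w)$ on the nose.

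With this identification the hypotheses of Theorem~\ref{main_theorem} are routine to verify, save for one point. Since the fraction is of Van Vleck type, every $b_j\in S_\theta$ and is nonzero, giving $p_k,q_k\in S_\theta$ and $p_k,q_k\neq 0$; the assumptions $w\in S_\theta$ and $|w|\le C$ supply $z\in S_\theta$ and $|z|\le C$. The single step that must be handled with care, and the only genuine content here, is the ratio bound, because the reversal interchanges the roles of the even- and odd-indexed coefficients relative to Corollary~\ref{main_lemma1}. Setting $m=n-k+1$, one finds
\[
\frac{p_k}{q_k}=\frac{b_{2(n-k)+1}}{b_{2(n-k)+2}}=\frac{b_{2m-1}}{b_{2m}},
\]
and as $k$ ranges over $1,\dots,n$ so does $m$. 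Hence $\sup_{1\le k\le n}\big|\tfrac{p_k}{q_k}\big|=\sup_{1\le m\le n}\big|\tfrac{b_{2m-1}}{b_{2m}}\big|\le\sup_m\big|\tfrac{b_{2m-1}}{b_{2m}}\big|$, so the standing assumption $C^2\ge\sup_m\big|\tfrac{b_{2m-1}}{b_{2m}}\big|\cdot\tfrac{1}{\cos 2\theta}$ forces $C^2\ge\sup_{1\le k\le n}\big|\tfrac{p_k}{q_k}\big|\cdot\tfrac{1}{\cos 2\theta}$, which is exactly the hypothesis Theorem~\ref{main_theorem} requires. It is worth stressing that this reversal is what produces the ratio $b_{2m-1}/b_{2m}$ in the corollary, the reciprocal of the one appearing in Corollary~\ref{main_lemma1}.

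Theorem~\ref{main_theorem} then gives $|r_{2n+1}(w)|\le C$ at once. To finish I would separately record that $r_{2n+1}(w)\in S_\theta$, which does not use the modulus bound at all: the sector $S_\theta$ with $\theta<\pi/2$ is a convex cone, hence closed under addition, and it is invariant under inversion $\zeta\mapsto 1/\zeta$ since inversion negates the argument. Evaluating the finite continued fraction from its innermost block $b_1+w\in S_\theta$ outward, each intermediate value $1/(b_j+\zeta)$ with $\zeta$ the preceding value again lies in $S_\theta$ (all real parts being positive, no denominator vanishes), so by induction the whole expression $r_{2n+1}(w)$ lies in $S_\theta$. Because everything in sight is a finite continued fraction, no convergence question intervenes. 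The only real obstacle is the index bookkeeping of the reversal; once the dictionary $q_k=b_{2(n-k+1)}$, $p_k=b_{2(n-k)+1}$ is fixed, the rest is direct verification.
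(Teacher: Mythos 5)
Your proposal is correct and follows exactly the paper's own route: the paper derives Corollary~\ref{main_theorem_i} by displaying the identity $r_{2n+1}(w)=\frac{1}{b_{2n}}\fplus\frac{1}{b_{2n-1}}\fplus\fdots\fplus\frac{1}{b_2}\fplus\frac{1}{b_1+w}$ and applying Theorem~\ref{main_theorem} to it, with the reversal of indices being precisely what turns the ratio into $\big|\frac{b_{2m-1}}{b_{2m}}\big|$, and with sector membership coming from the closure of $S_\theta$ under addition and reciprocation noted at the start of Section~\ref{sec3}. Your write-up merely makes explicit the index dictionary that the paper leaves as immediate.
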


Even tail sequences behave differently, as the later sequence terms determine the size of the previous terms as described in the following.

\begin{cor}\label{main_theorem_t}
Let $\textnormal{\large K}_{n=1}^\infty\big(\frac{1}{b_n}\big)$ be a Van Vleck fraction with $b_n \in S_{\theta}$ where $0\leq \theta < \pi/4$. Assume that \  $\sup\limits_n \left|\frac{b_{2n}}{b_{2n-1}}\right| < \infty$ and let $C^2 \geq \sup\limits_n \left|\frac{b_{2n}}{b_{2n-1}}\right|\cdot \frac1{\cos 2\theta}$. If \, $t_{2N}(w)\in S_{\theta}$ and $|t_{2N}(w)|\leq C$ then $t_{2n}(w)\in S_{\theta}$ and $|t_{2n}(w)|\leq C$ for all $n\leq N$.
\end{cor}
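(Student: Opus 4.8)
The plan is to recognize $t_{2n}(w)$ as precisely a continued fraction of the form treated in Theorem~\ref{main_theorem}, with the seed $t_{2N}(w)$ playing the role of the perturbation $z$, read off the modulus bound directly from that theorem, and then deal with the sector membership separately by an elementary cone argument. First I would fix $n\le N$, set $m:=N-n$ (this $m$ plays the role of the integer $n$ in Theorem~\ref{main_theorem}), and use the recorded backward formula
$$t_{2n}(w)=\frac{1}{b_{2n+1}}\fplus\frac{1}{b_{2n+2}}\fplus\fdots\fplus\frac{1}{b_{2N-1}}\fplus\frac{1}{b_{2N}+t_{2N}(w)}.$$
I would then relabel $q_k:=b_{2n+2k-1}$ and $p_k:=b_{2n+2k}$ for $1\le k\le m$, and put $z:=t_{2N}(w)$, so that $t_{2n}(w)$ is exactly
$$\frac{1}{q_1}\fplus\frac{1}{p_1}\fplus\fdots\fplus\frac{1}{q_m}\fplus\frac{1}{p_m+z}.$$

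Next I would check the hypotheses of Theorem~\ref{main_theorem}. Since the fraction is Van Vleck, every $q_k,p_k$ is one of the nonzero elements $b_j\in S_\theta$, and $z=t_{2N}(w)\in S_\theta$ with $|z|=|t_{2N}(w)|\le C$ by assumption. For the constant, each ratio $|p_k/q_k|=|b_{2(n+k)}/b_{2(n+k)-1}|$ is a member of the family $\{\,|b_{2j}/b_{2j-1}|\,\}_j$, so $\sup_{1\le k\le m}\left|\frac{p_k}{q_k}\right|\le \sup_j\left|\frac{b_{2j}}{b_{2j-1}}\right|$, whence $C^2\ge \sup_j|b_{2j}/b_{2j-1}|\cdot\frac{1}{\cos 2\theta}\ge \sup_{1\le k\le m}|p_k/q_k|\cdot\frac{1}{\cos 2\theta}$. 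Theorem~\ref{main_theorem} then applies verbatim and gives $|t_{2n}(w)|\le C$.

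For the sector membership I would use that each $s_j(\zeta)=\frac{1}{b_j+\zeta}$ maps $S_\theta$ into itself: if $\zeta\in S_\theta$ then $b_j+\zeta\in S_\theta$ because $S_\theta$ is a convex cone and $b_j\in S_\theta$ (and $b_j+\zeta\neq 0$, since for $\theta<\pi/2$ every nonzero element of $S_\theta$ has positive real part), while inversion preserves $S_\theta$ because $\operatorname{Arg}(1/\eta)=-\operatorname{Arg}(\eta)$. Writing $t_{2n}(w)=s_{2n+1}\circ s_{2n+2}\circ\cdots\circ s_{2N}\big(t_{2N}(w)\big)$ and using $t_{2N}(w)\in S_\theta$, I conclude $t_{2n}(w)\in S_\theta$.

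The steps are individually routine, so the only real point of care is the index bookkeeping: matching the pairing $(q_k,p_k)$ of the theorem to the odd/even pattern $(b_{2n+2k-1},b_{2n+2k})$ of the even tail, and confirming that the ratios occurring are a subfamily of $\{|b_{2j}/b_{2j-1}|\}$ so the global supremum bound transfers. It is worth emphasizing why one applies the theorem in one shot for each $n$ rather than by a single-step downward induction $t_{2n}\mapsto t_{2n-1}\mapsto t_{2(n-1)}$: the intermediate \emph{odd} tails need not obey the bound $\le C$, which is exactly the reason the estimate is formulated and used for pairs of elements.
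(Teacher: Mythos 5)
Your proposal is correct and follows essentially the same route as the paper: the paper derives this corollary ``immediately'' from Theorem~\ref{main_theorem} via the displayed formula $t_{2n}(w)=\frac{1}{b_{2n+1}}\fplus\frac{1}{b_{2n+2}}\fplus\fdots\fplus\frac{1}{b_{2N-1}}\fplus\frac{1}{b_{2N}+t_{2N}(w)}$, taking $z=t_{2N}(w)$ exactly as you do. Your explicit index bookkeeping and the cone argument for membership in $S_\theta$ (closure of $S_\theta$ under addition and reciprocation) simply spell out details the paper leaves implicit.
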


From the results above we can trivially, by taking the limit, deduce the following corollary.

\begin{cor}\label{corollary_to_main_theorem}
Let $\textnormal{\large K}_{n=1}^\infty\big(\frac{1}{b_n}\big)$ be a Van Vleck fraction with $b_n \in S_{\theta}$ where $0\leq \theta < \pi/4$.
Then the following results hold:

\begin{enumerate}
\item Assume that \  $\sup\limits_n \left|\frac{b_{2n}}{b_{2n-1}}\right| < \infty$ and let $C^2 \geq \sup\limits_n \left|\frac{b_{2n}}{b_{2n-1}}\right|\cdot \frac1{\cos 2\theta}$. If $w\in S_{\theta}$,
$|w|\leq C$ then $f_{\textrm{even}}(w) \in S_{\theta}$ and $|f_{\textrm{even}}(w)|\leq C$.
\item If $\sum_{n=1}^{\infty} |b_n|= \infty$, i.e. the fraction is convergent, then $|f(0)|^2\leq \sup\limits_n \left|\frac{b_{2n}}{b_{2n-1}}\right|\cdot \frac1{\cos 2\theta}$. 
\item Assume that $\sum_{n=1}^{\infty} |b_n|<\infty$ and  $\sup\limits_n \left|\frac{b_{2n-1}}{b_{2n}}\right| < \infty$, and let $C^2 \geq \sup\limits_n \left|\frac{b_{2n-1}}{b_{2n}}\right|\cdot \frac1{\cos 2\theta}$. If $w\in S_{\theta}$, $|w|\leq C$ then $r_{\textrm{odd}}(w) \in S_{\theta}$ and  $|r_{\textrm{odd}}(w)|\leq C$.
\item Assume that $\sum_{n=1}^{\infty} |b_n|<\infty$ and $\sup\limits_n \left|\frac{b_{2n}}{b_{2n-1}}\right| < \infty$, and let $C^2 \geq \sup\limits_n \left|\frac{b_{2n}}{b_{2n-1}}\right|\cdot \frac1{\cos 2\theta}$. If $t_{{\textrm{even}}}(w)\in S_{\theta}$ and $|t_{{\textrm{even}}}(w)|\leq C$ then $t_{2n}(w) \in S_{\theta}$ and $|t_{2n}(w)|\leq C$ for all $n$.
\end{enumerate}
\end{cor}

Next we will describe similar results but for different circles as value regions, namely circles through the origin. The results are stronger in this case and can be applied to Stieltjes type fractions as well as to general Van Vleck fractions with coefficients in arbitrary sectors.

\begin{theo}\label{main_lemma2}
Suppose $C$ is a constant such that $ C^2 \geq \frac14 \sup\limits_{1\leq k\leq n} \frac 1{\Re (q_{k}) \Re {\left(\frac 1{p_{k}}\right)}}$ where $p_k, q_k, z \in S_{\theta}$ with $0\leq \theta < \pi/2$ and $p_k, q_k \neq 0$. If $|z-C|\leq C$ then
\begin{equation*}
\left|\frac{1}{q_1}\fplus\frac{1}{p_1}\fplus\frac{1}{q_2}\fplus\frac{1}{p_2}\fplus\fdots\fplus\frac{1}{q_n}\fplus\frac{1}{p_n+z}-C\right| \leq C.
\end{equation*}
\end{theo}

As before we have the following list of consequences.

\begin{cor}\label{main_theorem1}
Let $\textnormal{\large K}_{n=1}^\infty\big(\frac{1}{b_n}\big)$ be a Stieltjes type continued fraction. Let $C$ be a constant such that $ C^2 \geq \frac14 \sup\limits_n \frac 1{\Re (b_{2n-1}) \Re {\left(\frac 1{b_{2n}}\right)}}$, assuming that  $\sup\limits_n \frac 1{\Re(b_{2n-1})\Re {\left(\frac 1{b_{2n}}\right)}} < \infty$.  Then the following results hold:

\begin{enumerate}
\item If $|w-C|\leq C$ then $|f_{2n}(w)-C|\leq C$.
\item If $\sum_{n=1}^{\infty} |b_n|< \infty$ then $|f_{\textrm{even}}(w)-C|\leq C$ whenever $|w-C|\leq C$.
\end{enumerate}

\end{cor}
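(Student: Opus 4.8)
The plan is to derive both parts directly from Theorem \ref{main_lemma2}, which already contains the analytic substance; the corollary is then a matter of unpacking the notation and, for part (2), passing to a limit.

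First I would prove part (1) by matching the even convergent to the continued fraction appearing in Theorem \ref{main_lemma2}. Reading the displayed formula $f_{2n}(w)=\frac{1}{b_1}\fplus\frac{1}{b_2}\fplus\fdots\fplus\frac{1}{b_{2n-1}}\fplus\frac{1}{b_{2n}+w}$ term by term against $\frac{1}{q_1}\fplus\frac{1}{p_1}\fplus\fdots\fplus\frac{1}{q_n}\fplus\frac{1}{p_n+z}$, I set $q_k=b_{2k-1}$, $p_k=b_{2k}$, and $z=w$. Under this identification the controlling quantity becomes $\frac{1}{\Re(q_k)\Re(1/p_k)}=\frac{1}{\Re(b_{2k-1})\Re(1/b_{2k})}$, and since the hypothesis $C^2\ge\frac14\sup_n\frac{1}{\Re(b_{2n-1})\Re(1/b_{2n})}$ takes the supremum over all $n$, it dominates the finite supremum $\frac14\sup_{1\le k\le n}(\cdots)$ demanded by the theorem.

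Next I would supply the sector. As $\{b_n\}$ is of Stieltjes type, each $b_j$ has positive real part, so the finitely many elements $b_1,\dots,b_{2n}$ together with the given $w$ (which lies in the closed right half-plane, the disk $|w-C|\le C$ being tangent to the imaginary axis at the origin) all lie in a common sector $S_\theta$ with $\theta<\pi/2$; concretely $\theta=\max_j|\operatorname{Arg} b_j|$ works if $w=0$, and otherwise one also includes $|\operatorname{Arg} w|<\pi/2$. I note that the bound in Theorem \ref{main_lemma2} does not depend on $\theta$, so any admissible choice suffices. With $p_k,q_k,w\in S_\theta$, the inequality $|w-C|\le C$, and the constant condition all verified, the theorem yields $|f_{2n}(w)-C|\le C$, which is part (1).

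For part (2), the assumption $\sum|b_n|<\infty$ makes the fraction Stern-Stolz, so by the Stern-Stolz theorem the sequences $A_{2n},A_{2n-1},B_{2n},B_{2n-1}$ converge to $A_{\textrm{even}},A_{\textrm{odd}},B_{\textrm{even}},B_{\textrm{odd}}$. Using $f_{2n}(w)=\frac{A_{2n}+wA_{2n-1}}{B_{2n}+wB_{2n-1}}$ from \eqref{convergent_formula}, the even convergents converge to $f_{\textrm{even}}(w)=\frac{A_{\textrm{even}}+wA_{\textrm{odd}}}{B_{\textrm{even}}+wB_{\textrm{odd}}}$. Part (1) gives the uniform bound $|f_{2n}(w)-C|\le C$ for every $n$, and this boundedness forces the limiting denominator to be nonzero and $f_{\textrm{even}}(w)$ finite; passing to the limit in the closed inequality then gives $|f_{\textrm{even}}(w)-C|\le C$. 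I expect no serious obstacle, since this is genuinely a corollary: the only points needing care are the index bookkeeping $q_k=b_{2k-1},\,p_k=b_{2k}$ and, in part (2), using the uniform boundedness from part (1) to rule out a vanishing limiting denominator before taking $n\to\infty$.
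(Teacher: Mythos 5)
Your proposal is correct and follows essentially the same route as the paper: the paper treats this corollary as an immediate consequence of Theorem \ref{main_lemma2} via exactly your identification $q_k=b_{2k-1}$, $p_k=b_{2k}$, $z=w$, with part (2) obtained ``trivially, by taking the limit'' using the Stern--Stolz theorem. The extra care you take (choosing a common sector $S_\theta$ for the finitely many $b_j$ and the fixed $w$, and using the uniform bound plus the determinant relation to rule out a vanishing limiting denominator) only fills in details the paper leaves implicit.
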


Similarly for odd reverse sequences we have the following results.

\begin{cor}\label{main_theorem2}
Let $\textnormal{\large K}_{n=1}^\infty\big(\frac{1}{b_n}\big)$ be a Stieltjes type fraction with \,$\sup\limits_n \frac 1{\Re (b_{2n})\Re {\left(\frac 1{b_{2n-1}}\right)}} < \infty$. Let $ C^2 \geq \frac14 \sup\limits_n \frac 1{\Re (b_{2n}) \Re {\frac 1{b_{2n-1}}}}$. Then the following is true.

\begin{enumerate}
\item If $|w-C|\leq C $ then $|r_{2n+1}(w)-C|\leq C$.
\item If $\sum_{n=1}^{\infty} |b_n|< \infty$ then $|r_{\textrm{odd}}(w)-C|\leq C$ whenever $|w-C|\leq C$.
\end{enumerate}

\end{cor}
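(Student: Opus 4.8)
The plan is to recognize $r_{2n+1}(w)$ as a particular instance of the continued fraction appearing in Theorem \ref{main_lemma2} and then pass to the limit, in direct parallel with the convergent case treated in Corollary \ref{main_theorem1}. Recall that
\begin{equation*}
r_{2n+1}(w)=\frac{1}{b_{2n}}\fplus\frac{1}{b_{2n-1}}\fplus\frac{1}{b_{2n-2}}\fplus\fdots\fplus\frac{1}{b_2}\fplus\frac{1}{b_1+w}.
\end{equation*}
Comparing this term by term with the fraction in Theorem \ref{main_lemma2} and reading the elements in the order in which they appear, I would set
\begin{equation*}
q_k=b_{2(n-k+1)},\qquad p_k=b_{2(n-k)+1}\qquad(1\le k\le n),\qquad z=w,
\end{equation*}
so that here $q_k$ runs through the even-indexed elements and $p_k$ through the odd-indexed ones (the reverse of the assignment for even convergents, which is exactly why the supremum in this corollary interchanges the even and odd roles relative to Corollary \ref{main_theorem1}). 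Writing $m=n-k+1$ one checks that the pairs $(q_k,p_k)$ are precisely $(b_{2m},b_{2m-1})$ for $m=1,\dots,n$, whence
\begin{equation*}
\sup_{1\le k\le n}\frac{1}{\Re(q_k)\,\Re\!\left(\frac{1}{p_k}\right)}=\sup_{1\le m\le n}\frac{1}{\Re(b_{2m})\,\Re\!\left(\frac{1}{b_{2m-1}}\right)}\le \sup_{m}\frac{1}{\Re(b_{2m})\,\Re\!\left(\frac{1}{b_{2m-1}}\right)}.
\end{equation*}
Thus the constant $C$ of the corollary automatically meets the hypothesis $C^2\ge\frac14\sup_{1\le k\le n}\frac{1}{\Re(q_k)\Re(1/p_k)}$ of Theorem \ref{main_lemma2}.

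The one step requiring care is the sector hypothesis, since Theorem \ref{main_lemma2} asks for a fixed sector $S_\theta$ with $\theta<\pi/2$, whereas the corollary assumes only that the fraction is of Stieltjes type. For this I would argue as follows. Every $b_j$ has $\Re(b_j)>0$, and the disk condition $|w-C|\le C$ forces $\Re(w)\ge |w|^2/(2C)\ge 0$, with equality only at $w=0$. Hence, for each fixed $n$, the finitely many numbers $b_1,\dots,b_{2n}$ together with $w$ all lie in a common sector $S_\theta$ with $\theta=\max_j|\textrm{Arg}\,b_j|<\pi/2$ (the value $w=0$ belonging to every sector). Applying Theorem \ref{main_lemma2} with this $\theta$ and $z=w$ then yields $|r_{2n+1}(w)-C|\le C$. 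The essential point is that the radius $C$ furnished by the theorem depends only on the real-part supremum and not on $\theta$, so the bound is uniform in $n$; this proves part (1).

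For part (2), I would assume $\sum_n|b_n|<\infty$, so that the fraction is of Stern--Stolz type. As recorded immediately after the Stern--Stolz theorem, $r_{2n+1}(w)$ then converges to the finite limit $r_{\textrm{odd}}(w)$. Since part (1) places every term $r_{2n+1}(w)$ in the closed disk $\{\zeta:|\zeta-C|\le C\}$ and this disk is closed, the limit also satisfies $|r_{\textrm{odd}}(w)-C|\le C$, which is part (2).

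The main obstacle is precisely the reconciliation of the fixed-sector hypothesis of Theorem \ref{main_lemma2} with the weaker Stieltjes-type assumption here. The two facts that make it go through are that for each fixed $n$ only finitely many elements are involved, so a common sector of half-angle strictly less than $\pi/2$ always exists, and that the estimate produced is governed purely by the real parts and hence is independent of the chosen $\theta$; this $\theta$-independence is what lets the bound be uniform in $n$ and survive the passage to the limit.
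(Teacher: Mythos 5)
Your proof is correct and takes essentially the same route the paper intends: the corollary is read off from Theorem \ref{main_lemma2} via the displayed identification of $r_{2n+1}(w)$ with the even-indexed elements playing the role of the $q_k$'s and the odd-indexed ones the $p_k$'s (which is exactly why the supremum swaps roles relative to Corollary \ref{main_theorem1}), with part (2) following from the Stern--Stolz convergence of $r_{2n+1}(w)$ and closedness of the disk. Your reconciliation of the fixed-sector hypothesis with the Stieltjes-type assumption fills in a detail the paper leaves implicit; the only slip is that when $w\neq 0$ the common half-angle should be $\theta=\max\bigl(|\textrm{Arg}\,w|,\max_j|\textrm{Arg}\,b_j|\bigr)$ rather than the maximum over the $b_j$ alone, a trivial adjustment.
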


As before, the even tail sequences behave a bit differently.

\begin{cor}\label{main_theorem3}
Let $\textnormal{\large K}_{n=1}^\infty\big(\frac{1}{b_n}\big)$ be a Stieltjes type continued fraction. Let $C$ be a constant such that $ C^2 \geq \frac14 \sup\limits_n \frac 1{\Re (b_{2n-1}) \Re {\left(\frac 1{b_{2n}}\right)}}$, assuming that  $\sup\limits_n \frac 1{\Re(b_{2n-1})\Re {\left(\frac 1{b_{2n}}\right)}} < \infty$.  Then the following results hold:

\begin{enumerate}
\item If $|t_{2N}-C|\leq C$ then $|t_{2n}(w)-C|\leq C$ for all $n\leq N$.
\item If $\sum_{n=1}^{\infty} |b_n|< \infty$ then $|t_{2n}(w)-C|\leq C$ whenever $|t_{\textrm{even}}(w)-C|\leq C$
\end{enumerate}

\end{cor}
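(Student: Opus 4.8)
The plan is to recognize the even tail sequence $t_{2n}(w)$ as precisely a finite continued fraction of the form in Theorem~\ref{main_lemma2}, with the later tail term playing the role of the shift $z$. For fixed $N$ and any $n<N$ one has
\begin{equation*}
t_{2n}(w)=\frac{1}{b_{2n+1}}\fplus\frac{1}{b_{2n+2}}\fplus\fdots\fplus\frac{1}{b_{2N-1}}\fplus\frac{1}{b_{2N}+t_{2N}(w)},
\end{equation*}
which matches the left-hand side of Theorem~\ref{main_lemma2} under the identification $q_k=b_{2n+2k-1}$, $p_k=b_{2n+2k}$ (for $1\le k\le N-n$) and $z=t_{2N}(w)$. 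Since the fraction is Stieltjes type, each $b_j$ has positive real part, and the shift hypothesis $|t_{2N}(w)-C|\le C$ places $z$ in the disk tangent to the imaginary axis at the origin, so $\Re(z)\ge 0$ as well. As only finitely many elements occur, they all lie in a common sector $S_\theta$ with $\theta=\max\{|\textrm{Arg}\,b_{2n+1}|,\dots,|\textrm{Arg}\,b_{2N}|,|\textrm{Arg}\,z|\}<\pi/2$; this is precisely why a result stated for a common sector applies to arbitrary Stieltjes type fractions.

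For part (1) I would first check that the global constant controls the truncated one. Writing $j=n+k$, as $k$ ranges over $1,\dots,N-n$ the index $j$ ranges over $n+1,\dots,N$, so
\begin{equation*}
\sup_{1\le k\le N-n}\frac{1}{\Re(q_k)\Re\left(\frac{1}{p_k}\right)}
=\sup_{n+1\le j\le N}\frac{1}{\Re(b_{2j-1})\Re\left(\frac{1}{b_{2j}}\right)}
\le\sup_{j}\frac{1}{\Re(b_{2j-1})\Re\left(\frac{1}{b_{2j}}\right)},
\end{equation*}
so the standing hypothesis $C^2\ge\frac14\sup_j\frac{1}{\Re(b_{2j-1})\Re(1/b_{2j})}$ forces the constant condition of Theorem~\ref{main_lemma2} for every truncation. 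Hence, assuming $|t_{2N}(w)-C|\le C$, that theorem applies verbatim for each fixed $n<N$ and gives $|t_{2n}(w)-C|\le C$; the case $n=N$ is the hypothesis itself, so (1) holds for all $n\le N$.

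For part (2) the Stern-Stolz hypothesis $\sum_n|b_n|<\infty$ lets me pass to the limit. Rather than feed in $t_{2N}(w)$, which need only approach the target disk from outside, I would fix the tail input at $t_{\textrm{even}}(w)$ and set
\begin{equation*}
h_N:=\frac{1}{b_{2n+1}}\fplus\fdots\fplus\frac{1}{b_{2N-1}}\fplus\frac{1}{b_{2N}+t_{\textrm{even}}(w)}.
\end{equation*}
Since $|t_{\textrm{even}}(w)-C|\le C$, the computation of part (1) shows $|h_N-C|\le C$ for every $N$. The crux, and the step I expect to be the main obstacle, is to prove $h_N\to t_{2n}(w)$; taking the limit in $|h_N-C|\le C$ then finishes the proof. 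I would establish this by writing both $t_{2n}(w)$ and $h_N$ as the single M\"obius map $\Phi_N=s_{2n+1}\circ\cdots\circ s_{2N}$ evaluated at $t_{2N}(w)$ and at $t_{\textrm{even}}(w)$ respectively. By the Stern-Stolz convergence of tails noted after the Stern-Stolz theorem, $t_{2N}(w)\to t_{\textrm{even}}(w)$, and by convergence of the numerators and denominators of the shifted fraction $\Phi_N$ converges to a fixed M\"obius transformation $\Phi_\infty$. Since $t_{2n}(w)=\Phi_N(t_{2N}(w))$ is independent of $N$, it equals $\Phi_\infty(t_{\textrm{even}}(w))$, which is also the limit of $h_N=\Phi_N(t_{\textrm{even}}(w))$; the only care needed is that $t_{\textrm{even}}(w)$ is not a pole of $\Phi_\infty$, which is guaranteed by the uniform bound $|h_N-C|\le C$ keeping the values $h_N$ finite.
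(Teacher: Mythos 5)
Your proposal is correct and follows the route the paper itself takes: the paper offers no separate proof of this corollary, treating it as an immediate consequence of Theorem~\ref{main_lemma2} via the identity $t_{2n}(w)=\frac{1}{b_{2n+1}}\fplus\frac{1}{b_{2n+2}}\fplus\fdots\fplus\frac{1}{b_{2N-1}}\fplus\frac{1}{b_{2N}+t_{2N}(w)}$, with the limiting statement handled by the phrase ``as before \dots by taking the limit.'' Your part (1) --- identifying $q_k=b_{2n+2k-1}$, $p_k=b_{2n+2k}$, $z=t_{2N}(w)$, checking that the global supremum dominates each truncated one, and noting that finitely many elements of positive real part together with $z$ lie in a common sector $S_\theta$ with $\theta<\pi/2$ --- is exactly this intended argument. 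Where you add genuine value is part (2): you correctly observe that the hypothesis places only the limit $t_{\textrm{even}}(w)$ in the disk, not any finite tail $t_{2N}(w)$, so one cannot simply invoke part (1) and pass to the limit; your fix --- evaluating the finite fraction at $t_{\textrm{even}}(w)$ to get $h_N$ satisfying $|h_N-C|\leq C$ by Theorem~\ref{main_lemma2}, then proving $h_N\to t_{2n}(w)$ via Stern--Stolz convergence of the M\"obius maps $\Phi_N$ for the shifted fraction together with $t_{2N}(w)\to t_{\textrm{even}}(w)$, with the pole of $\Phi_\infty$ excluded by the uniform bound on $h_N$ and the nondegeneracy relation $A_{\textrm{odd}}B_{\textrm{even}}-A_{\textrm{even}}B_{\textrm{odd}}=1$ --- is precisely the rigorous content behind the paper's one-line limit claim, and it is sound.
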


Corollaries \ref{main_theorem1}, \ref{main_theorem2} and \ref{main_theorem3} yield in turn the following results regarding the size of even convergents, odd reverse sequences, and even tail sequences of a Van Vleck fraction $\textnormal{\large K}_{n=1}^\infty \big(\frac{1}{b_n}\big)$ with the coefficients $b_n$ now in a bigger sector $S_{\theta}$ with $0\leq \theta < \pi/2$.

\begin{cor}\label{cor_to_main_theorem}
Let $\textnormal{\large K}_{n=1}^\infty\big(\frac{1}{b_n}\big)$ be a Van Vleck fraction with $b_n \in S_{\theta}$ where $0\leq \theta < \pi/2$.
Then the following results hold:

\begin{enumerate}
\item Assume that \  $\sup\limits_n \left|\frac{b_{2n}}{b_{2n-1}}\right| < \infty$ and let $C^2\geq \frac14 \sup\limits_n \left|\frac{b_{2n}}{b_{2n-1}}\right| \cdot \frac 1{{\cos}^2 \theta}$. If $w\in S_{\theta}$,
$|w-C|\leq C$ then $f_{\textrm{even}}(w) \in S_{\theta}$ and $|f_{\textrm{even}}(w)-C|\leq C$.
\item If $\sum_{n=1}^{\infty} |b_n|= \infty$,  then $|f(0)|^2\leq \sup\limits_n \left|\frac{b_{2n}}{b_{2n-1}}\right|\cdot \frac1{\cos^2\theta}$. 
\item Assume that $\sum_{n=1}^{\infty} |b_n|<\infty$ and  $\sup\limits_n \left|\frac{b_{2n-1}}{b_{2n}}\right| < \infty$ and let $C^2 \geq \frac14\sup\limits_n \left|\frac{b_{2n-1}}{b_{2n}}\right|\cdot \frac1{\cos^2\theta}$. If $w\in S_{\theta}$, $|w-C|\leq C$ then $r_{\textrm{odd}}(w) \in S_{\theta}$ and $|r_{\textrm{odd}}(w)-C|\leq C$.
\item Assume that $\sum_{n=1}^{\infty} |b_n|<\infty$ and $\sup\limits_n \left|\frac{b_{2n}}{b_{2n-1}}\right| < \infty$ and let $C^2 \geq \frac14\sup\limits_n \left|\frac{b_{2n}}{b_{2n-1}}\right|\cdot \frac1{\cos^2\theta}$. If $t_{{\textrm{even}}}(w)\in S_{\theta}$ and $|t_{{\textrm{even}}}(w)-C|\leq C$ then $t_{2n}(w) \in S_{\theta}$ and $|t_{2n}(w)-C|\leq C$ for all $n$.
\end{enumerate}
\end{cor}

Finally, we indicate how the theory described above can be used to control solutions of discrete Dirac equations (\ref{discrete_dirac_matrix_equation}). We have the following useful result.
\begin{cor}\label{dirac_cor}
Let $g_n$, $h_n$ be the solutions of equation (\ref{discrete_dirac_matrix_equation}) with initial condition $g_0=0$, $h_0=1$. Then for every $n$ (and $m$):
\begin{equation*}
\frac{g_n}{h_{n+1}}\leq\sqrt{\sup\limits_l \frac{\beta_{l}}{\alpha_{l+1}}}.
\end{equation*}
\end{cor}


\section{Proofs of Main Theorems}\label{sec3}

The proof of Theorem \ref{main_theorem} relies completely on the lemmas below. They describe the behavior of M\"obius transformations from two-step continued fractions. First we remark that the sector $S_{\theta}$ is closed under addition and reciprocation, i.e. for any $z, w \in S_{\theta}$ we also have $\frac 1z, z+w \in S_{\theta}$.

\begin{lem}\label{key_lemma}
Suppose $p, q$  and $z\in S_{\theta}$ with $p,q \neq 0$ and $0\leq \theta < \pi/4$. If $|z|\leq C$ for some constant $C$ then 
\begin{equation*}
\cfrac{1}{\left|q+\cfrac{1}{p+z}\right|}\leq\cfrac{1}{\left||q|e^{i\theta}+\cfrac{e^{-i\theta}}{|p|+C}\right|}.
\end{equation*}
\end{lem}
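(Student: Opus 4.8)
The plan is to reduce the two-variable optimization over $z \in S_\theta$ with $|z| \le C$ to a worst-case analysis, showing that the inner expression $q + \tfrac{1}{p+z}$ is \emph{smallest in modulus} precisely when $z$, $p$, and $q$ are configured to point ``outward'' at the sector boundary. Since the left side of the claimed inequality equals $\bigl|q + \tfrac{1}{p+z}\bigr|^{-1}$ and the right side is a constant depending only on $|p|$, $|q|$, $\theta$, and $C$, the lemma is equivalent to the lower bound
\begin{equation*}
\left| q + \frac{1}{p+z} \right| \geq \left| |q|e^{i\theta} + \frac{e^{-i\theta}}{|p|+C} \right|.
\end{equation*}
So the whole problem is to \emph{minimize} the left-hand modulus as $z$ ranges over $\{|z|\le C\}\cap S_\theta$, with $p,q\in S_\theta$ fixed, and verify the minimum matches the right-hand side.

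First I would handle the inner term $\tfrac{1}{p+z}$. Because $S_\theta$ is closed under addition and reciprocation (as the excerpt notes just before the lemma), $p+z \in S_\theta$ and hence $\tfrac{1}{p+z} \in S_\theta$ as well. The key geometric fact is that to make $|q + \tfrac{1}{p+z}|$ small, one wants $q$ and $\tfrac{1}{p+z}$ to have arguments as far apart as possible while both staying in $S_\theta$ — that is, one at argument $+\theta$ and the other at $-\theta$, giving an angular separation of $2\theta$ between them. Since $\theta < \pi/4$ forces $2\theta < \pi/2$, the two vectors still have positive inner product and there is genuine cancellation but no collapse. I would then argue that the modulus of a sum of two vectors in a sector, with fixed individual moduli, is minimized when their arguments are pushed to opposite sector edges; this follows from the law of cosines, $|a+b|^2 = |a|^2 + |b|^2 + 2|a||b|\cos(\arg a - \arg b)$, which decreases as the angular gap $|\arg a - \arg b|$ increases toward its maximum $2\theta$. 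Placing $q$ at modulus $|q|$, argument $+\theta$ and $\tfrac{1}{p+z}$ at argument $-\theta$ produces exactly the pair $|q|e^{i\theta}$ and $e^{-i\theta}/(|p|+C)$ appearing on the right, provided I can also show $\bigl|\tfrac{1}{p+z}\bigr| \ge \tfrac{1}{|p|+C}$, i.e. $|p+z| \le |p|+C$, which is immediate from the triangle inequality and $|z|\le C$.

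The step I expect to be the genuine obstacle is the \emph{joint} optimization: the modulus $|\tfrac{1}{p+z}|$ and its argument $\arg \tfrac{1}{p+z}$ are coupled through the single variable $z$, so I cannot independently send the modulus to its minimum $\tfrac{1}{|p|+C}$ and the argument to $-\theta$ — these might be achieved at different values of $z$. The careful point is to show that driving $\arg\tfrac{1}{p+z}$ toward $-\theta$ (by choosing $z$ near the appropriate sector edge) is \emph{compatible} with, and indeed tends to coincide with, maximizing $|p+z|$ up to $|p|+C$. I would make this precise by writing $w = p+z$ and noting that as $z$ sweeps the constraint region $\{|z|\le C\}\cap S_\theta$, the admissible $w$ fills a region whose extreme configuration for our purposes (largest modulus at the most negative argument, or an argument/modulus tradeoff favorable to minimizing the total sum) can be pinned down by a monotonicity argument in the law-of-cosines expression. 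The clean way to close this is to bound below by replacing $q$ with $|q|e^{i\theta}$ and $\tfrac{1}{p+z}$ with $e^{-i\theta}/(|p|+C)$ simultaneously, checking via the cosine formula that any deviation of either argument into the interior of the sector, or any increase of $|p+z|$ beyond $|p|+C$, only increases the modulus of the sum — so the stated right-hand side is a valid uniform lower bound. Once the worst-case pairing is justified, taking reciprocals of both sides reverses the inequality and yields the lemma as stated.
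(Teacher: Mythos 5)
Your proposal is correct and is essentially the paper's own argument: the paper likewise compares the two squared moduli term by term, bounding the cross term by the worst-case angular separation $2\theta$ (phrased algebraically as $\Re(pq)\geq |p||q|\cos 2\theta$ for $p,q\in S_\theta$) and the remaining term via $|p+z|\leq |p|+C$, with $\cos 2\theta\geq 0$ (i.e.\ $\theta<\pi/4$) making both relaxations go the right way. The ``joint optimization'' you flag as the obstacle dissolves exactly as you suggest: relax the angle first (valid for any modulus of $\tfrac{1}{p+z}$), then relax the modulus down to $\tfrac{1}{|p|+C}$ (valid because the cosine coefficient is nonnegative), which is precisely what the paper's chain of inequalities does implicitly.
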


\begin{proof}
First we note that for $p,q\in \mathbb C$ we have $\Re{(pq)}+\Re{(p\overline q)}=2\Re (p)\Re (q)$. Since any $z\in S_{\theta}$ satisfies $|z|\cos \theta \leq \Re(z) \leq |z|$ it follows that
\begin{equation*}
\Re{(pq)}\ge 2|p||q|\cos^2\theta - |p||q|= |p||q| \cos 2 \theta.
\end{equation*}

Thus,
\begin{equation*}
\Re\left(\frac{q}{{\bar p+\bar z}}\right)\geq \frac{|q| \cos 2\theta}{|p+z|}\geq  \frac{|q| \cos 2\theta}{|p|+C}.
\end{equation*}
Now we see that 

\begin{equation*}
\begin{aligned}
{\left||q|e^{i\theta}+\cfrac{e^{-i\theta}}{|p|+C}\right|}^2 &= |q|^2 + \frac{|q|\cos 2 \theta}{|p|+C} + \frac 1{(|p|+C)^2}\\
& \leq |q|^2 + \Re\left(\frac{q}{{\bar p+ \bar z}}\right) + \frac 1{|p+z|^2}\\
&={\left|q+\cfrac{1}{p+z}\right|}^2 
 \end{aligned}
\end{equation*}
from which the result follows.
\end{proof}

Using this lemma we will solve the following value region problem, namely we will show that  for large enough $C>0$ the map $\frac 1{q+\frac 1{p+z}}$ preserves the circle centered at the origin with radius $C$, given $p,q,z \in S_{\theta}$ with $0\leq \theta < \pi/4$.

\begin{lem}\label{key_lemma1}
Let $p,q,z, \theta$ be as in Lemma \ref{key_lemma} and $C$ be a constant such that $C^2 \geq \frac{|p|}{|q|\cos 2\theta}$. If $|z|\leq C$ then $\frac{1}{\left|q+\frac{1}{p+z}\right|} \leq C$.
\end{lem}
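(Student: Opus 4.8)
The plan is to reduce the claim to the scalar estimate of Lemma \ref{key_lemma} and then dispatch a single elementary inequality. Set $M:=|q|e^{i\theta}+\frac{e^{-i\theta}}{|p|+C}$. Lemma \ref{key_lemma} applies (since $p,q,z\in S_\theta$ and $|z|\le C$) and gives $\frac{1}{|q+\frac{1}{p+z}|}\le\frac{1}{|M|}$, so it suffices to prove $|M|\ge\frac1C$. This eliminates $z$ from the problem and reduces it to a statement about the two nonnegative reals $|p|,|q|$ and the constant $C$.

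First I would compute $|M|^2$ directly. Writing the two summands of $M$ in terms of their real and imaginary parts and using $\cos^2\theta-\sin^2\theta=\cos2\theta$, I obtain
\begin{equation*}
|M|^2=|q|^2+\frac{2|q|\cos2\theta}{|p|+C}+\frac{1}{(|p|+C)^2}.
\end{equation*}
Hence the target $|M|\ge 1/C$ is equivalent to $|q|^2+\frac{2|q|\cos2\theta}{|p|+C}+\frac{1}{(|p|+C)^2}\ge\frac1{C^2}$.

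Next I would feed in the hypothesis. Because $0\le\theta<\pi/4$ we have $\cos2\theta>0$, so the assumption $C^2\ge\frac{|p|}{|q|\cos2\theta}$ rearranges to $|q|\cos2\theta\ge\frac{|p|}{C^2}$, whence $\frac{2|q|\cos2\theta}{|p|+C}\ge\frac{2|p|}{C^2(|p|+C)}$. Discarding the nonnegative term $|q|^2$, it remains only to check the rational inequality $\frac{2|p|}{C^2(|p|+C)}+\frac{1}{(|p|+C)^2}\ge\frac1{C^2}$. Clearing the positive denominator $C^2(|p|+C)^2$ turns this into $2|p|(|p|+C)+C^2\ge(|p|+C)^2$, i.e. $|p|^2\ge0$, which is trivially true. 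Reading the chain backwards gives $|M|\ge 1/C$ and therefore $\frac{1}{|q+\frac{1}{p+z}|}\le\frac{1}{|M|}\le C$, as claimed.

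The step demanding the most care is the evaluation of $|M|^2$: the cross term must come out with the coefficient $2\cos2\theta/(|p|+C)$, and it is exactly this factor of $2$ that lets the final inequality collapse to $|p|^2\ge0$ after the $|q|^2$ contribution is thrown away. Recognizing that $|q|^2$ is not needed---so that the bound on $C$ enters only through the middle term---is the one non-mechanical observation in the argument; everything else is substitution of the hypothesis and clearing denominators.
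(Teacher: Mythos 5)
Your proof is correct and follows essentially the same route as the paper: both invoke Lemma \ref{key_lemma} to reduce the problem to showing $\left||q|e^{i\theta}+\frac{e^{-i\theta}}{|p|+C}\right|\geq \frac{1}{C}$, and then verify this by elementary algebra from the hypothesis $C^2\geq \frac{|p|}{|q|\cos 2\theta}$. The only divergence is in that final algebra: the paper first completes a square (via $|q|\geq |q|\cos^2 2\theta$) to obtain the intermediate bound $|q|\cos 2\theta+\frac{1}{|p|+C}$ and then substitutes $|p|\leq C^2|q|\cos 2\theta$, whereas you substitute the hypothesis into the cross term of the squared modulus, discard $|q|^2$, and collapse what remains to $|p|^2\geq 0$ --- a cosmetic difference, with both computations equally valid.
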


\begin{proof}

Since $|q|\left(1-\cos^2 2\theta\right)\geq 0$ for $0 \leq \theta < \pi/4$ we have that 
\begin{equation*}
|q|+\frac{2\cos2\theta}{|p|+C}\geq |q|\cos^2 2\theta+\frac{2\cos2\theta}{|p|+C}.
\end{equation*}
Therefore,
\begin{equation*}
\begin{aligned}
{\left||q|e^{i\theta}+\cfrac{e^{-i\theta}}{|p|+C}\right|}^2 &= |q|\left(|q|+ \frac {2\cos 2 \theta}{|p|+C}\right)+ \frac1{(|p|+C)^2}\\
&\geq |q|\left( |q|\cos^2 2\theta+\frac{2\cos2\theta}{|p|+C}\right)+ \frac1{(|p|+C)^2}\\
&= {\left(\cos2\theta|q|+\cfrac{1}{\left(|p|+C\right)}\right)}^2 
\end{aligned}
\end{equation*}
showing $\left||q|e^{i\theta}+\cfrac{e^{-i\theta}}{|p|+C}\right|\geq \cos2\theta|q|+\cfrac{1}{|p|+C}$ . 

Hence,
\begin{equation*}
\begin{aligned}
\left|q+\cfrac{1}{p+z}\right|  &\geq \left| |q|e^{i \theta} + \cfrac {e^{-i\theta}}{|p|+C}\right| \geq  \left||q|\cos2\theta+\cfrac{1}{|p|+C}\right|\\
&\geq |q|\cos2\theta+\cfrac{1}{C^2|q| \cos 2 \theta+C}  = \frac {C^2 |q|^2 \cos^2 2 \theta +C|q| \cos 2 \theta +1}{C (1+C|q|\cos2\theta)}\\
& \geq \frac {C|q| \cos 2 \theta +1}{C (1+C|q|\cos2\theta)} =\frac1C .
\end{aligned}
\end{equation*}
\end{proof}

Now we can proceed with the proof of Theorem \ref {main_theorem}.

\begin{proof}[\bf Proof of Theorem \ref {main_theorem}:] \ 
Since $S_{\theta}$ is closed under addition and reciprocation we can now apply the previous lemma to obtain
\begin{equation*}
\cfrac{1}{\left|q_n+\cfrac{1}{p_n+z}\right|}\leq C.
\end{equation*} 
By $n$ successive applications of the above step we get the desired result.
\end{proof}

The following example illustrates that the angle $\theta =\pi/ 4$ in Theorem \ref{main_theorem} is optimal.

\example
Let $p=q=t e^{i\pi/4}$  and $z= \frac 1{t^{1/3}}e^{\frac {i\pi}4}$ where $0<t\leq 1/2$. We will show that $\frac{1}{\left|q+\frac{1}{p+z}\right|}\geq |z|$. Since $|z|$ can be arbitrarily large for $\theta =\pi/4$ this will then show that statement of Lemma \ref{key_lemma1} does not hold for this case.

First we compute that

\begin{equation*}
{\left|q+ \frac 1{p+z}\right|}^2= {\left|ti+\cfrac 1{t+ \cfrac 1{t^{1/3}}}\right|}^2= t^2 + \frac {t^{2/3}}{{\left(t^{4/3}+1\right)}^2}.
\end{equation*}
Let $s=t^{4/3}$. Since $0< s \leq 1/2$ we see that $s(s^2+s-1)<s(1/4+1/2-1)<0$.  Thus, 

\begin{equation*}
\left|q+\frac{1}{p+z}\right|^2 = t^2 +\frac {t^{2/3}}{{\left(t^{4/3}+1\right)}^2} = s^{3/2}+ \frac{s^{1/2}}{(s+1)^2}= s^{1/2} \left[\frac{s(s^2+s-1)}{(s+1)^2} +1\right] < t^{2/3} = \frac 1{|z|^2}.
\end{equation*}

In order to prove Theorem \ref{main_lemma2} we need the following key lemma on M\"obius transformations.

\begin{lem}
Suppose $p,q,z \in S_{\theta}$ where $0\leq \theta < \pi/2$ and $\Re(q), \Re(p), \Re(z) >0$. Let $C$ be such that $C^2 \geq \frac14 \frac 1{\Re(q) \Re \big({\frac 1p}\big)}$. If $|z-C|\leq C $ then $\left|\frac 1{q+\frac 1{p+z}} -C \right| \leq C$.
\end{lem}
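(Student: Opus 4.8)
The plan is to establish the disk-preservation property $|w - C| \le C$ directly, mirroring the structure of Lemma \ref{key_lemma1} but replacing the origin-centered circle with the circle through the origin centered at $C$ on the real axis. The condition $|z - C| \le C$ is equivalent, after expanding $|z-C|^2 = |z|^2 - 2C\Re z + C^2 \le C^2$, to the clean inequality $|z|^2 \le 2C \Re z$, i.e. $\Re z \ge |z|^2/(2C)$. Likewise, the target conclusion $\left|\frac{1}{q + 1/(p+z)} - C\right| \le C$ is equivalent to $\Re\!\left(\frac{1}{q + 1/(p+z)}\right) \ge \frac{1}{2C}\left|\frac{1}{q + 1/(p+z)}\right|^2$. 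First I would reduce both the hypothesis and the conclusion to these ``$\Re \ge (\text{modulus})^2/2C$'' forms, since that is the natural invariant for a circle through the origin.

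The core computation is then to bound $\Re\!\left(\frac{1}{q + 1/(p+z)}\right)$ from below. Writing $u = q + \frac{1}{p+z}$, I have $\Re(1/u) = \Re(u)/|u|^2$, so the target reduces to showing $\Re(u) \ge |u|/(2C) \cdot |1/u| \cdot |u| = \ldots$; more precisely, the conclusion $\Re(1/u) \ge |1/u|^2/(2C)$ is the same as $\Re(u) \ge |u|^2 \cdot |1/u|^2/(2C) \cdot |u|^2 /|u|^2$, which simplifies to $\Re(u) \ge 1/(2C)$. So the entire statement collapses to proving the lower bound
\begin{equation*}
\Re\!\left(q + \frac{1}{p+z}\right) \ge \frac{1}{2C}.
\end{equation*}
This is the heart of the matter. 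Now $\Re(q)$ is a fixed positive number, and I must bound $\Re\!\left(\frac{1}{p+z}\right)$ from below. The plan is to use $\Re\!\left(\frac{1}{p+z}\right) = \frac{\Re(p) + \Re(z)}{|p+z|^2}$ and exploit the hypothesis $|z-C|\le C$, which gives $\Re z \ge 0$ and keeps $\Re(p+z) > 0$.

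The main obstacle, and where the constant $C^2 \ge \frac{1}{4}\frac{1}{\Re(q)\Re(1/p)}$ must be used, is controlling $\Re\!\left(\frac{1}{p+z}\right)$ in terms of $\Re(1/p)$ rather than letting $z$ degrade it. Since $\Re z \ge 0$ and $z \in S_\theta$, adding $z$ to $p$ moves $p+z$ further into the right half plane, so I expect a monotonicity-type argument showing $\Re\!\left(\frac{1}{p+z}\right)$ stays suitably positive; alternatively I would bound it below by a quantity involving $\Re(1/p)$ directly. Once I have $\Re(u) \ge \Re(q) + (\text{something} \ge 0)$, I need $\Re(q) \cdot (\text{that something}) \ge$ a term matching $\frac{1}{4C^2} \le \Re(q)\Re(1/p)$, so that an AM--GM step $\Re(q) + \Re(1/(p+z)) \ge 2\sqrt{\Re(q)\,\Re(1/(p+z))} \ge 2\sqrt{\Re(q)\Re(1/p)/(\text{const})}$ closes the gap against $1/(2C)$. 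I anticipate the delicate point is verifying that the $z$-perturbation does not spoil the factor $\Re(1/p)$ — this likely requires showing $\Re\!\left(\frac{1}{p+z}\right)$ is bounded below by a fraction of $\Re(1/p)$ uniformly over the admissible disk $|z - C|\le C$, and I would handle it by a direct real-part estimate using $\Re(z)\ge 0$ together with the geometry of $S_\theta$.
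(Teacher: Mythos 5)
Your first reduction is correct and is exactly the paper's opening move: using the equivalence $|w-K|\le K \iff \Re\left(\frac 1w\right)\ge \frac{1}{2K}$ (for $K>0$), both the hypothesis on $z$ and the desired conclusion collapse, and the lemma becomes the single inequality $\Re q + \Re\left(\frac{1}{p+z}\right) \ge \frac{1}{2C}$. But that is where your argument stops being a proof: the core lower bound on $\Re\left(\frac{1}{p+z}\right)$ is left as a plan, and the concrete strategy you sketch cannot work. You propose to show $\Re\left(\frac{1}{p+z}\right) \ge c\,\Re\left(\frac 1p\right)$ uniformly over the disk $|z-C|\le C$ and then finish by AM--GM; feeding the hypothesis $\Re(q)\Re\left(\frac 1p\right) \ge \frac{1}{4C^2}$ through AM--GM forces $c \ge \frac14$. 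No such uniform fraction exists: take $p=\epsilon>0$ real and $z=2C$ (allowed, since $|z-C|=C$); then $\Re\left(\frac{1}{p+z}\right) = \frac{1}{2C+\epsilon}$ stays bounded while $\Re\left(\frac 1p\right)=\frac 1\epsilon \to \infty$, so the ratio tends to $0$. Worse, AM--GM itself is too lossy exactly where the lemma is tight: with $\Re q = \frac{\epsilon}{4C^2}$, $p=\epsilon$, $z=2C$, the true sum $\frac{\epsilon}{4C^2}+\frac{1}{2C+\epsilon}$ exceeds $\frac{1}{2C}$ only by a margin of order $\epsilon^2$, whereas $2\sqrt{\Re q\, \Re\left(\frac{1}{p+z}\right)}$ is of order $\sqrt\epsilon$ and falls far below $\frac{1}{2C}$. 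In the extremal configuration the two terms are wildly unbalanced, so any argument that symmetrizes them cannot close the gap.

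What is missing is the paper's second use of the very same disk equivalence, this time applied to $p$ with a $q$-dependent radius. When $\Re q < \frac{1}{2C}$ (otherwise the target inequality is trivial, since $\Re\left(\frac{1}{p+z}\right)>0$), the hypothesis on $C$ rearranges to $\Re\left(\frac 1p\right) \ge \frac{1-2C\Re q}{4C^2\Re q} = \frac{1}{2(B-C)}$ with $B=\frac{C}{1-2C\Re q}$, i.e. $|p-(B-C)|\le B-C$. The triangle inequality then gives $|p+z-B| \le |p-(B-C)|+|z-C| \le (B-C)+C=B$, which converts back to $\Re\left(\frac{1}{p+z}\right) \ge \frac{1}{2B} = \frac{1}{2C}-\Re q$, exactly what is needed. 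The underlying mechanism --- the regions $\{w : \Re(1/w)\ge \frac{1}{2K}\}$ are disks through the origin, and summing two such disks adds their parameters $K$ --- is precisely the ingredient your plan lacks, and it cannot be replaced by a pointwise comparison of $\Re\left(\frac{1}{p+z}\right)$ with $\Re\left(\frac 1p\right)$.
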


\begin{proof}
First, we notice that for any $w\in \mathbb C$ and constant $K>0$
\begin{equation*}
|w-K|\leq K \;\; \textrm{ if and only if } \;\; \Re \left(\frac 1w \right) \geq \frac 1{2K}.
\end{equation*}
Thus, it suffices to show that $\Re(q)+ \Re \left(\frac 1{p+z}\right) \geq \frac 1{2C}$ whenever $|z-C|\leq C $.  

Since the condition on $C$ implies that $4C^2\Re(q) \Re \left({\frac 1p}\right) \geq 1 >  1-2C \Re(q)$, we have $\Re \left({\frac 1p}\right) \geq \frac {1-2C \Re(q)}{4c^2 \Re(q)}$. Thus,
\begin{equation*}
 \Re \left({\frac 1p}\right) \geq  \frac {1-2C \Re(q)}{4c^2 \Re(q)} =\cfrac 1{2\left(\cfrac {C}{1-2C \Re(q)}-C\right)}= \frac 1{2(B-C)}
\end{equation*}
where $B=\frac C{1-2C \Re(q)}$, which is equivalent to, $|p-(B-C)|\leq B-C$. Thus,

\begin{equation*}
|p+z-B| \leq |z-C|+|p-(B-C)|\leq C+ B-C =B,
\end{equation*}
which then is equivalent to
\begin{equation*}
\Re \left(\frac 1{p+z}\right)\geq \frac 1{2B}=\frac 1{2C}-\Re(q)
\end{equation*}
from which the result follows.
\end{proof}

\begin{proof}[\bf Proof of Theorem \ref{main_lemma2}:]\ 
Applying the above lemma once we obtain,
\begin{equation*}
\left|\frac 1{q_n+\frac 1{p_n+z}} -C \right|\leq C. 
\end{equation*} 
By recursively applying the lemma $n$ times, we get the result.
\end{proof}

\begin{proof}[\bf Proof of Corollary \ref{dirac_cor}:]\ 
Setting $b_{2n}=m\alpha_n$ and $b_{2n+1}=m\beta_n$ and using the initial conditions $g_0=0$, $h_0=1$, we see that equation (\ref{discrete_dirac_matrix_equation}) becomes equation (\ref{H-equations}) for $H_n=B_n$, and $g_n=B_{2n-1}$, $h_n=B_{2n-2}$. Consequently, the ratio $\frac{g_n}{h_{n+1}}=\frac{B_{2n-1}}{B_{2n}}=r_{2n+1}(0)$ is equal to the value of the odd reverse sequence at zero. To prove the estimate for this ratio it is then enough to use Corollary \ref{main_theorem_i} with $\theta=0$ and constant $C$ such that $C^2 = \sup\limits_n \left|\frac{b_{2n}}{b_{2n-1}}\right|= \sup\limits_l \frac{\beta_{l}}{\alpha_{l+1}}$.

\end{proof}

\end{document}